\numberwithin{equation}{section}
\newtheorem{prop}{Proposition}
\newtheorem{thm}[prop]{Theorem}
\newtheorem{cor}[prop]{Corollary}
\numberwithin{prop}{section}
\theoremstyle{definition}
\newtheorem{defn}[prop]{Definition}
\newtheorem{rmk}[prop]{Remark}
\newcommand{\dt}{\frac{\partial}{\partial t}}
\newcommand{\brs}[1]{\left| #1 \right|}
\newcommand{\gG}{\Gamma}
\newcommand{\gD}{\Delta}
\newcommand{\gd}{\delta}
\newcommand{\gs}{\sigma}
\newcommand{\ga}{\alpha}
\newcommand{\gb}{\beta}
\newcommand{\gl}{\lambda}
\renewcommand{\ge}{\epsilon}
\newcommand{\N}{\nabla}
\renewcommand{\bar}[1]{\overline{#1}}
\newcommand{\del}{\partial}
\newcommand{\delb}{\bar{\partial}}
\newcommand{\bj}{\bar{j}}
\newcommand{\bl}{\bar{l}}
\newcommand{\bn}{\bar{n}}
\newcommand{\til}[1]{\widetilde{#1}}
\DeclareMathOperator{\Sym}{Sym}
\DeclareMathOperator{\Rc}{Rc}
\DeclareMathOperator{\End}{End}
\DeclareMathOperator{\Id}{Id}
\DeclareMathOperator{\tr}{tr}
\begin{document}

\title[Generalized K\"ahler geometry and the
pluriclosed flow]{Generalized K\"ahler geometry and the pluriclosed flow}

\begin{abstract} In \cite{ST1} the authors introduced a parabolic flow for
pluriclosed metrics, referred to as pluriclosed flow. We also demonstrated
in \cite{ST2} that this flow, after certain gauge transformations, gives a
class of solutions to the renormalization group flow of the nonlinear sigma
model with $B$-field.
Using these transformations, we show that our pluriclosed flow preserves
generalized K\"ahler structures in a natural way. Equivalently,
when coupled with a nontrivial evolution equation for the two complex
structures, the $B$-field renormalization group flow also preserves
generalized K\"ahler structure. We emphasize that it is crucial to evolve the
complex structures in the right way to establish this fact.
\end{abstract}

\author{Jeffrey Streets}
\address{Rowland Hall\\
         University of California, Irvine\\
         Irvine, CA 92617}
\email{\href{mailto:jstreets@uci.edu}{jstreets@uci.edu}}

\author{Gang Tian}
\address{Beijing University, China and Princeton University,
         Princeton, NJ 08544}
\email{\href{mailto:tian@math.princeton.edu}{tian@math.princeton.edu}}

\date{\today}

\maketitle

\section{Introduction}

The purpose of this note is to show that the pluriclosed flow
introduced in \cite{ST1} preserves generalized K\"ahler geometry.  This
introductory section introduces the main results in a primarily mathematical
context, while a physical discussion of the results appears in section
\ref{phys}.  First we
recall the concept of a generalized K\"ahler manifold.
\begin{defn} A \emph{generalized K\"ahler manifold} is a Riemannian manifold
$(M^{2n},
g)$
together with two complex structures $J_+, J_-$, each compatible with $g$,
further
satisfying
\begin{gather} \label{GK}
\begin{split}
d_+^c \omega_+ = - d_-^c \omega_- =&\ H,\\
d H =&\ 0.
\end{split}
\end{gather}
\end{defn}
\noindent This concept first arose in the work of Gates, Hull, and Ro\v{c}ek
\cite{GHR}, in their study of $N = (2,2)$ supersymmetric sigma models. Later
these structures were put into the rich context of Hitchin's generalized
geometric structures \cite{Hitchin} in the thesis of Gualtieri
\cite{Gualtieri} (see also \cite{Gualt2}).

Recall that a Hermitian manifold $(M^{2n}, \omega, J)$ is \emph{pluriclosed}
if the K\"ahler form
$\omega$ satisfies $i \del \delb \omega = d d^c \omega = 0$.
Note that a generalized K\"ahler manifold $(M,g,J_+,J_-)$ consists of a pair
of pluriclosed
structures $(M,\omega_+, J_+)$ and $(M, \omega_-, J_-)$ whose associated
metrics are equal and furthermore satisfy the
first equation of (\ref{GK}), where $\omega_{\pm} (\cdot,\cdot) = g
(J_{\pm}\cdot,\cdot)$.
The pluriclosed flow is the time evolution equation
\begin{align} \label{PCF}
\dt \omega =&\ \del \del^*_{\omega} \omega + \delb \delb^*_{\omega} \omega +
\frac{\sqrt{-1}}{2} \del \delb \log \det g.
\end{align}
It follows from Theorem 1.2 in \cite{ST1} that with $\omega_{\pm}$ as initial
metrics,
we get solutions $\omega_{\pm}(t)$ of \eqref{PCF} on $M\times [0,T_{\pm})$.
Let $g_{\pm}(t)$ be the Hermitian metric on $M$
whose K\"ahler form is $ \omega_{\pm}(t)$. As in Theorem 6.5 of \cite{ST2}, we
set
\begin{equation}\label{eq:cfield1}
X_{\pm} \,=\, \left (- J_{\pm} d^*_{g_{\pm}(t)} \omega_{\pm}
(t)\right)^{\sharp_{\pm}},
\end{equation}
where $\sharp_{\pm}$ denotes the natural isomorphism from $T^* M$ onto $TM$
defined using
$g_{\pm}(t)$. Further, let $\phi_{\pm}(t)$ denote the one-parameter family of
diffeomorphisms generated by $X_{\pm}$, with $\phi_{\pm}(0) = \Id$. Then our
main theorem can be stated as follows:

\begin{thm} \label{PCF2BEFD} Let $(M, g, J_+, J_-)$ be a generalized K\"ahler
manifold. With notations as above, one has that
$\phi_+(t)^*(g_+(t)) \,=\, \phi_-(t)^*(g_-(t))$, and we denote this metric
$g(t)$. Furthermore, $T_+ = T_- =: T$, and $(M, g(t), \phi_+(t)^*J_+,
\phi_-(t)^*J_-)$ is
a family of generalized K\"ahler manifolds on $[0, T)$ with initial value
$(M,g,J_+,J_-)$.
\end{thm}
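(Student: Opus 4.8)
The plan is to establish three assertions in sequence: first, that the two gauge-transformed flows produce the same metric $g(t)$; second, that the maximal existence times coincide; and third, that the generalized K\"ahler conditions \eqref{GK} are preserved under the coupled evolution. My strategy rests on the identification, established in \cite{ST2}, of the gauge-fixed pluriclosed flow with the $B$-field renormalization group flow. The key observation is that under the diffeomorphisms $\phi_\pm(t)$ generated by the vector fields $X_\pm$ of \eqref{eq:cfield1}, the pluriclosed flow for $\omega_\pm$ transforms into a flow for the pair $(g_\pm(t), H_\pm(t))$ that depends only on the underlying metric and torsion three-form, \emph{not} on the choice of complex structure. Concretely, I expect the pulled-back data $\phi_\pm(t)^*(g_\pm(t), H_\pm(t))$ to satisfy the same metric-torsion system, namely a coupled flow of the schematic form $\dt g = -2\Rc + \tfrac{1}{2} H \circ H$ together with $\dt H = \gD_H H$ (the precise form being that recorded in \cite{ST2}).

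With this reduction in hand, the equality $\phi_+(t)^*(g_+(t)) = \phi_-(t)^*(g_-(t))$ follows from uniqueness of solutions to this parabolic metric-torsion system. At $t = 0$ both sides equal $g$, and the initial torsions agree up to sign: the defining relation \eqref{GK} gives $H = d_+^c \omega_+ = -d_-^c \omega_-$, so the two flows start from metric-torsion data $(g, H)$ and $(g, -H)$. Since the flow for $H$ is governed by a Hodge-type Laplacian that is insensitive to the global sign of $H$, and the $H \circ H$ term in the metric equation is quadratic in $H$, both gauge-transformed flows solve the \emph{same} initial value problem for the metric; hence their metrics coincide for all time by parabolic uniqueness. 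This simultaneously forces $T_+ = T_-$, since the maximal existence time of either flow is determined by the common metric-torsion solution, and the two flows blow up (or fail to) together. I would make this rigorous by invoking the well-posedness and uniqueness theory for this system, as developed in \cite{ST1, ST2}.

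The substantive remaining task, and the one I expect to be the main obstacle, is verifying that $(M, g(t), \phi_+(t)^* J_+, \phi_-(t)^* J_-)$ satisfies the generalized K\"ahler conditions for all $t \in [0, T)$. The point is that the complex structures must be evolved precisely by pullback along $\phi_\pm$; this is the ``crucial'' evolution the abstract emphasizes. The strategy is to verify the two requirements separately. For the pluriclosed-compatibility of each pair individually, I would note that $\phi_\pm(t)^* J_\pm$ is automatically $\phi_\pm(t)^* g_\pm(t)$-compatible since pullback by a diffeomorphism preserves compatibility, and that the pluriclosed condition is diffeomorphism-invariant, so $(\phi_\pm(t)^*\omega_\pm(t), \phi_\pm(t)^* J_\pm)$ remains pluriclosed. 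The genuinely delicate point is the \emph{coupling} equation $d_+^c \omega_+ = -d_-^c \omega_- = H$ with $dH = 0$: this links the two complex structures through a \emph{single} shared torsion form, and it is not preserved by arbitrary independent diffeomorphisms of the two factors. I would verify it by computing the evolution of the Lee forms and the torsion three-forms $H_\pm(t) = d^c_\pm \omega_\pm(t)$ along the flow, showing that the gauge transformations $\phi_\pm$ are engineered precisely so that $\phi_+(t)^* H_+(t) = -\phi_-(t)^* H_-(t)$ holds for all $t$, with both equal to the common torsion $H(t)$ of the metric-torsion flow from the previous paragraph. The closedness $dH(t) = 0$ should then follow either from the structure of the torsion flow equation or from the fact that $dH$ evolves by a linear parabolic equation vanishing identically at $t = 0$. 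Carrying out this torsion computation, and confirming that the specific vector fields $X_\pm$ in \eqref{eq:cfield1} produce exactly the alignment needed for the coupling condition to persist, is where the real work lies.
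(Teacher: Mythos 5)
Your proposal follows essentially the same route as the paper: gauge-transform both pluriclosed flows to the $B$-field flow via $\phi_\pm$, exploit the sign symmetry $H \mapsto -H$ of the system together with $d^c_+\omega_+ = -d^c_-\omega_-$ to see that both reductions solve the same initial value problem, and conclude equality of metrics, of torsions, and of existence times from parabolic uniqueness. The one place you slightly overcomplicate matters is the final coupling condition: no separate evolution computation for the Lee forms or torsions is needed, since the identity $\phi_+(t)^*(d^c_+\omega_+(t)) = -\phi_-(t)^*(d^c_-\omega_-(t)) = H(t)$ is already delivered by the uniqueness step, and the only remaining check is the purely formal naturality statement that $d^c$ for the pulled-back complex structure applied to the pulled-back form equals the pullback of $d^c_\pm\omega_\pm$.
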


Hence, \eqref{PCF} preserves generalized K\"ahler structures. In \cite{ST2},
we found a striking
relationship between solutions to \eqref{PCF} and the B-field renormalization
group flow, and the proof of Theorem \ref{PCF2BEFD} makes essential use of
this. The B-field renormalization group flow arises from physical
considerations. Consider a pair $(g, H)$ of a
Riemannian metric $g$ and closed three-form $H$ on a manifold $M$. The form
$H$ is thought of as the field strength of a locally defined $2$-form $B$
(i.e. $H = dB$). Given this data, and a dilaton $\Phi$ on $M$, one can
associate a Lagrangian of maps of Riemann surfaces $f: (\Sigma, h) \to M$,
called the worldsheet nonlinear sigma model action, given by
\begin{gather} \label{action}
S = - \frac{1}{2} \int_{\Sigma} \left[ \brs{\N f}^2 +
\frac{\ge^{\ga\gb}}{\sqrt{h}} B_{ij} \del_{\ga} f^i \del_{\gb} f^j - 2 \Phi
R(h) \right] dV_h.
\end{gather}
We have suppressed a scaling parameter $\ga'$ which is often included in this
definition, see (\cite{Polchinski} p. 111) for more detail on this action and
what follows. Imposing cutoff independence of the associated quantum theory
leads to first order renormalization group flow equations 
\begin{gather}
\begin{split} \label{flow}
\dt g_{ij} =&\ - 2 \Rc_{ij} + \frac{1}{2} H_{ipq} H_j^{\ pq}\\
\dt H =&\ \gD_{d} H,
\end{split}
\end{gather}
where $\gD_{d} = - \left( d d^* + d^* d \right)$ is the Laplace-Beltrami
operator.
In general there is a dilaton evolution as well, but this decouples from the
above system after applying a diffeomorphism gauge transformation
(\cite{Woolgar} pg. 6), and so is not directly relevant to the discussion
here.

In view of results in \cite{ST2}, one can ask: {\it Does (\ref{flow}) preserve
generalized K\"ahler geometry}?
As it turns out, in the naive sense in which this question is usually asked,
the answer is
no. Specifically, given $(M^{2n}, g, J_{\pm})$ a generalized K\"ahler
manifold, one may ask whether the solution to (\ref{flow}) with initial
condition $(g, d^c_+ \omega_+)$ remains generalized K\"ahler in the sense that 
$g$ remains compatible with $J_{\pm}$ and the equations (\ref{GK}) hold. This
is false in general. One has to evolve the complex structures appropriately so
that they are compatible with $(M,g(t))$
and consequently, give rise to generalized K\"ahler structures. The
corresponding evolution
equation is not obvious at all, and would be quite difficult to guess directly
from
\eqref{flow}. The key insight comes from the pluriclosed flow and its relation
to \eqref{flow} established in \cite{ST2}.

The next theorem is a reformulation of Theorem \ref{PCF2BEFD} in terms of the
B-field flow.

\begin{thm} \label{mainthm} Let $(M^{2n}, g, J_+, J_-)$ be a generalized
K\"ahler structure. The solution to (\ref{flow}) with initial condition $(g,
d^c_+ \omega_+)$ remains a generalized K\"ahler structure in the following
sense: There exists a parabolic flow of complex structures such that if
$J_{\pm}(t)$ are its solutions with initial value $J_{\pm}$, then
the triple $(g(t), J_+(t), J_-(t))$ satisfies the conditions of (\ref{GK}).
\end{thm}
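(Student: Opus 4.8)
The plan is to deduce Theorem \ref{mainthm} from Theorem \ref{PCF2BEFD} by translating it through the correspondence between the pluriclosed flow \eqref{PCF} and the $B$-field flow \eqref{flow} established in \cite{ST2}. Recall from Theorem 6.5 of \cite{ST2} that the gauge transformation by the diffeomorphisms $\phi_{\pm}(t)$ of \eqref{eq:cfield1} converts a solution $\omega_{\pm}(t)$ of \eqref{PCF} into a solution of \eqref{flow}: the pair $\bigl(\phi_{\pm}(t)^*g_{\pm}(t),\,\phi_{\pm}(t)^*(d^c_{\pm}\omega_{\pm}(t))\bigr)$ satisfies \eqref{flow}. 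Starting from the generalized K\"ahler datum $(M,g,J_+,J_-)$ we have $d^c_+\omega_+ = -d^c_-\omega_- = H$, so the $+$ gauge produces the solution of \eqref{flow} with initial value $(g, d^c_+\omega_+)$; its underlying metric is $\phi_+(t)^*g_+(t)$, which by Theorem \ref{PCF2BEFD} equals $\phi_-(t)^*g_-(t)$. We therefore set $g(t):=\phi_+(t)^*g_+(t)=\phi_-(t)^*g_-(t)$ and note that the $-$ gauge produces the same metric evolution, its three-form being $-H$ (the sign is immaterial since $H$ enters \eqref{flow} only quadratically in the metric equation and linearly in the heat equation).

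Next I would exhibit the advertised flow of complex structures by setting $J_{\pm}(t):=\phi_{\pm}(t)^*J_{\pm}$ and differentiating. Since $J_{\pm}$ is fixed while $\phi_{\pm}(t)$ is generated by the time-dependent field $X_{\pm}(t)$, naturality of the Lie derivative under pullback gives
\begin{equation*}
\dt J_{\pm}(t) \,=\, \phi_{\pm}(t)^*\bigl(\mathcal{L}_{X_{\pm}(t)}J_{\pm}\bigr) \,=\, \mathcal{L}_{V_{\pm}(t)}J_{\pm}(t), \qquad V_{\pm}(t):=\phi_{\pm}(t)^*X_{\pm}(t).
\end{equation*}
Because pullback commutes with $d^*$, with the musical isomorphism, and with the action of the complex structure, the generating field is intrinsic to the evolving data:
\begin{equation*}
V_{\pm}(t) \,=\, \bigl(-J_{\pm}(t)\,d^*_{g(t)}\,\phi_{\pm}(t)^*\omega_{\pm}(t)\bigr)^{\sharp}, \qquad \phi_{\pm}(t)^*\omega_{\pm}(t) \,=\, g(t)\bigl(J_{\pm}(t)\,\cdot\,,\,\cdot\bigr).
\end{equation*}
This is the desired parabolic flow of complex structures: a Lie-derivative evolution for $J_{\pm}(t)$ whose coefficients are determined by the metric $g(t)$, which itself runs the $B$-field flow \eqref{flow}.

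It then remains to check the conclusions, which are inherited directly from Theorem \ref{PCF2BEFD}. Pullback by a diffeomorphism preserves compatibility of a metric with a complex structure, preserves integrability, and commutes with $d^c_{\pm}$ and with passing to the fundamental form; hence the assertion that $(M,g(t),\phi_+(t)^*J_+,\phi_-(t)^*J_-)$ is generalized K\"ahler is verbatim the statement that $(g(t),J_+(t),J_-(t))$ satisfies \eqref{GK}, with $H(t)=\phi_+(t)^*(d^c_+\omega_+(t))$. The common existence time $T_+=T_-=T$, the matching of initial data, and the fact that each $J_{\pm}(t)$ remains an integrable complex structure (being a diffeomorphism pullback of one) are likewise immediate.

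The main obstacle is establishing that the derived equation $\dt J_{\pm}(t)=\mathcal{L}_{V_{\pm}(t)}J_{\pm}(t)$ is genuinely parabolic, so that the coupled system is well posed and the constructed $J_{\pm}(t)$ are legitimately identified as \emph{the} solutions of a parabolic flow with initial value $J_{\pm}$. Since $V_{\pm}$ already carries one derivative of the metric through $d^*_{g(t)}$, the Lie derivative produces second-order terms in $J_{\pm}$, and one must compute the principal symbol of this operator to confirm that, modulo lower-order gauge terms, it is of strictly parabolic (Laplacian) type; the gauge vector field $X_{\pm}$ plays here the role of a DeTurck-type term that restores parabolicity, which is precisely why this particular evolution—and not a naive guess from \eqref{flow}—is the correct one. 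A secondary point requiring care is the consistency of the two independent gauges: the conclusion $\phi_+(t)^*g_+(t)=\phi_-(t)^*g_-(t)$ of Theorem \ref{PCF2BEFD} is exactly what permits a single metric $g(t)$ to be simultaneously compatible with both evolving complex structures $J_{\pm}(t)$.
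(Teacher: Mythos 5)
Your reduction of the theorem to Theorem \ref{PCF2BEFD} via the gauge correspondence of \cite{ST2} is exactly the paper's strategy, and your identification $J_{\pm}(t)=\phi_{\pm}(t)^*J_{\pm}$ with $\dt J_{\pm}(t)=\mathcal L_{V_{\pm}(t)}J_{\pm}(t)$ is correct. But the statement being proved asserts the existence of a \emph{parabolic} flow of complex structures of which these are \emph{the} solutions, and you have left precisely that claim unproved: you write that ``one must compute the principal symbol of this operator to confirm'' parabolicity, which is an acknowledgement of the gap rather than a closing of it. This computation is the substantive new content of the theorem beyond Theorem \ref{PCF2BEFD}, and it is carried out in the paper as Proposition \ref{Jev}: expanding $\mathcal L_X J$ with $X^p=-J_t^pD^sJ_s^t$ produces the second-order terms $D_kD^sJ_s^l+J_k^pJ_t^lD_pD^sJ_s^t$, which are \emph{not} manifestly a Laplacian. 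The paper converts them into $(\gD J)_k^l$ plus curvature and first-order terms only by invoking the vanishing of the Nijenhuis tensor (via the identity $0=D^k(J_i^lN_{jk}^i)$) together with Bianchi-type symmetrizations of the curvature terms. So the parabolicity is a nontrivial consequence of the integrability of $J_{\pm}$, not something that follows from symbol-counting alone; your heuristic that $X_{\pm}$ acts as ``a DeTurck-type term that restores parabolicity'' misattributes the mechanism (in the paper the DeTurck gauge-fixing by $X_g^k=g^{ij}(\gG_{ij}^k-(\gG^0)_{ij}^k)$ is a separate device, used afterwards to prove short-time existence and uniqueness for the coupled system (\ref{GKflow})).

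A second, related omission: to say that $J_{\pm}(t)$ are ``the solutions with initial value $J_{\pm}$'' of the parabolic flow, you need uniqueness of solutions to the coupled system $(g,H,J_{\pm})$, since otherwise the constructed pullbacks are merely \emph{a} family satisfying the equation. The paper supplies this with a short-time existence and uniqueness theorem for (\ref{GKflow}) (proved by the DeTurck trick and the harmonic map heat flow argument), and then Corollary \ref{maincor} identifies the gauge-transformed pluriclosed flow with the unique solution. Your proposal gestures at well-posedness but does not establish it. In short, the architecture of your argument matches the paper's, but the two analytic pillars --- the explicit parabolic form (\ref{Jflow}) of the $J$-evolution, and uniqueness for the resulting coupled system --- are both missing.
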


\noindent In fact, $J_{\pm}(t) = (\phi_t^{\pm})^* J_{\pm}$ for the
one-parameter families of diffeomorphisms $\phi_t^{\pm}$
which relate \eqref{PCF} to \eqref{flow}, as described in Theorem
\ref{PCF2BEFD}.
However, it is unclear yet how to construct $\phi_{\pm}(t)$ from \eqref{flow}
since \eqref{flow} does not tell how to get $J_{\pm}(t)$. A more precise
statement of Theorem \ref{mainthm} is given below as Corollary \ref{maincor}.

We end the paper in section \ref{class} with some structural results that must
be satisfied for pluriclosed structures which evolve under (\ref{flow}) by
homotheties, which we call \emph{static} structures. In particular, we exhibit
some properties showing that a static structure is automatically
K\"ahler-Einstein, and give a complete classification in the case of
non-K\"ahler complex
surfaces.

\begin{thm}\label{classthm} Let $(M^4, g, J)$ be a static pluriclosed
structure and suppose $b_1(M)$ is odd. Then $(M^4, J)$ is locally isometric to
$\mathbb R \times S^3$ with the standard product metric. The universal cover
of $(M, J)$ is biholomorphic to $\mathbb C^2 \setminus \{(0, 0) \}$, and $M$
admits a finite sheeted cover $\til{M}$ with fundamental group $\mathbb Z$,
specifically
\begin{align}
\pi_1(\til{M}) \cong \mathbb Z = \left< (z_1, z_2) \to (\ga z_1, \gb z_2)
\right>
\end{align}
where $\ga, \gb \in \mathbb C$, $1 < \brs{\ga} = \brs{\gb}$.
\end{thm}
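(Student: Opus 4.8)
The plan is to pass from the parabolic \emph{static} condition to a rigid elliptic system, to prove that this system forces the Bismut connection to be flat, and finally to read off the global structure from the classification of the resulting model geometry. First I would record the static equation: by definition $(M^4,g,J)$ evolves under \eqref{flow} (equivalently \eqref{PCF}) by homotheties, so up to an overall scaling and a one-parameter family of biholomorphisms the metric is unchanged. Differentiating at $t=0$ yields a soliton-type identity $S_{i\bar j} = \lambda g_{i\bar j} + (\mathcal L_X g)_{i\bar j}$ for a constant $\lambda$ and a real-holomorphic vector field $X$, where $S$ denotes the Ricci-type form driving \eqref{PCF}. The structural results established earlier in the section (in particular that a static structure is K\"ahler--Einstein whenever it is K\"ahler) reduce matters to the genuinely non-K\"ahler case: since $b_1(M)$ is odd, $(M^4,J)$ is a non-K\"ahler surface by Kodaira's theorem, so $\omega$ is not closed and the torsion $H = d^c \omega \neq 0$.

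The heart of the argument, and the step I expect to be the main obstacle, is to upgrade this trace-type static identity to full flatness of the Bismut connection $\N^B = \N + \tfrac12 g^{-1} H$. On a complex surface the curvature tensor has few independent components, and I would combine the static identity with the pluriclosed condition $dH = 0$ and the structure equations for $\N^B$ to show first that $H$ is $\N^B$-parallel (forcing $\lambda$, so that the soliton is in fact steady) and then that the full Bismut curvature $\Rm^B$ vanishes. This is where the dimension $n=2$ and the non-vanishing of $H$ are essential, and where care is needed to absorb the vector field $X$ (showing it is Killing and holomorphic, hence harmless) and to exclude the remaining non-K\"ahler classes, which cannot support a Bismut-flat structure.

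Granting Bismut-flatness, I would invoke the classification of simply connected Bismut-flat Hermitian manifolds: the universal cover is a Lie group with bi-invariant metric and Cartan three-form torsion, and in real dimension four with $H \neq 0$ the only possibility is $\mathbb R \times SU(2) = \mathbb R \times S^3$ with its product metric, giving the asserted local isometry. The standard diffeomorphism $\mathbb R \times S^3 \cong \mathbb C^2 \setminus \{(0,0)\}$ intertwines the complex structures, identifying the universal cover of $(M,J)$ with $\mathbb C^2 \setminus \{(0,0)\}$ biholomorphically. Finally I would analyze the deck group $\Gamma$: each element is a biholomorphic isometry of $\mathbb C^2 \setminus \{(0,0)\}$, hence by Hartogs extension (fixing the origin) linear, and compactness of $M$ forces $\Gamma$ to contain a contraction, so $M$ is a Hopf surface. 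Kodaira's classification of Hopf surfaces then produces a finite-sheeted cover $\til M$ with $\pi_1(\til M) \cong \mathbb Z$ generated by a diagonal map $(z_1,z_2) \mapsto (\ga z_1, \gb z_2)$; the product-metric condition forces $\brs{\ga} = \brs{\gb}$, and cocompactness forces this common modulus to exceed $1$ after possibly replacing the generator by its inverse.
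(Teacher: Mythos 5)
Your overall architecture (reduce to a rigid elliptic condition, deduce a flat local model, then classify the quotient as a Hopf surface) points in the right direction, but the step you yourself flag as ``the main obstacle'' --- upgrading the trace-level static identity to full flatness of the Bismut connection --- is asserted rather than proved, and it is precisely where all the content lives. The static condition with $X=0$ gives only $\Rc = \gl g + \tfrac14 H^2$ and $\tfrac12 \gD_d H = -\gl H$, i.e.\ a statement about the \emph{Ricci} curvature of the Bismut connection; in general Ricci-flatness does not imply flatness, and your proposed route (``show $H$ is $\N^B$-parallel, then $\Rm^B = 0$'') is not carried out and does not obviously follow from $dH=0$ plus structure equations alone. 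The paper bridges this gap by a concrete chain: first $\gl = 0$ is forced by Corollary \ref{constantfix} (integration by parts rules out $\gl<0$ on a non-K\"ahler surface, Bonnet--Myers rules out $\gl>0$ since $b_1$ is odd); then the Bochner technique applied to the harmonic Lee form $\theta = \star H$ under $\Rc = \tfrac14 H^2 \geq 0$ gives $D\theta = 0$; this converts the $B$-field static equation into the pluriclosed static equation $S = Q = \tfrac12\brs{T}^2\omega$, i.e.\ a Hermitian--Einstein condition on $TM$; and finally the classification of Gauduchon--Ivanov (\cite{GaudIv} Theorem 2) delivers the local isometry with $\mathbb R \times S^3$. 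Your proposal replaces this last citation with an appeal to the classification of Bismut-flat manifolds, which would be a legitimate alternative endgame, but only after Bismut-flatness is actually established --- and that is the theorem-sized hole in your argument.

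Two smaller points. First, you set up the problem as a soliton equation with a nontrivial vector field $X$ and then propose to ``absorb'' it; by the paper's definition a \emph{static} structure has $X=0$, so this complication is self-inflicted, and your alternative derivation of $\gl=0$ (via $\N^B$-parallelism of $H$) is both unproved and unnecessary given the Bonnet--Myers/integration-by-parts argument. Second, your final paragraph on the deck group (Hartogs linearization, Kodaira's classification, $\brs{\ga}=\brs{\gb}$ from the product metric) is a reasonable sketch of what the paper obtains by citing \cite{Gauduchon}, and that portion would be acceptable once the local model is in hand.
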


\noindent \textbf{Acknowledgements:} The authors would like to thank 
Sergey Cherkis for his comments, and the referee for a careful reading and 
some helpful suggestions.

\section{Physical Interpretation} \label{phys}

The first order RG flow equations (\ref{flow}) are derived by imposing cutoff
independence for the quantum field theory associated to a nonlinear sigma model.
 For the pure gravity model these equations were first derived by Friedan
\cite{Friedan}, yielding the Ricci flow for the order $\ga'$ approximation, while
for the model including a skew-symmetric background field,
these equations were derived in \cite{Friedan2} (see also \cite{Friedan3}).  Recently,
due partly
to the mathematical breakthroughs of Perelman \cite{P1}, this flow has garnered
more interest in the mathematics and physics communities.  In particular, in
\cite{Woolgar} the authors generalized Perelman's $\mathcal F$ functional to
show that (\ref{flow}) is in fact the gradient flow of the lowest eigenvalue of a 
certain Schr\"odinger
operator.  This property is suggested by Zamolodchikov's $c$-theorem \cite{Zal},
which implies the irreversibility of some RG flows.  Furthermore, the first
author showed in \cite{Str} that a certain generalization of Perelman's entropy
functional is monotone for (\ref{flow}).  

In this paper we address a different issue related to the RG flow.  Recall that,
as exhibited in \cite{GHR}, when imposing $N = (2,2)$ supersymmetry, the
equations (\ref{GK}) are induced on the target space of a $2$-dimensional
nonlinear sigma model, whose underlying sigma model action (\ref{action}).  A
very natural question in this context is whether one can expect the
supersymmetry equations to be preserved along the solution to the RG flow, when
away from a fixed point.  Our results show that the system of equations
(\ref{flow}), will \emph{not} in general preserve the $N = (2,2)$ supersymmetry
equations (\ref{GK}).  However, if one
adds an evolution equation for the complex structures $J_{\pm}$, specified in
(\ref{GKflow}), then the renormalized coupling constants $(g(t), H(t),
J_{\pm}(t))$, \emph{will} define a supersymmetric model, for all cutoff scales. 
In fact it is clear from our proofs that the entire discussion is true for the
weaker $N = 2$ supersymmetry equations.  

Our derivation of the evolution equation for $J_{\pm}$ comes from recognizing
special diffeomorphism gauges relating solutions to (\ref{flow}) to solutions of
(\ref{PCF}), where half of the supersymmetry equations (\ref{GK}) are clearly
preserved with respect to a fixed complex structure.  Thus the evolution
equations for $J_{\pm}$ come from the action of the gauge group, hence it is
unlikely one could modify the sigma model action (\ref{action}) to derive these
equations.  This makes equation for $J_{\pm}$ in (\ref{GKflow}) all the more
surprising and mysterious.  Understanding the physical meaning of the evolution
for $J_{\pm}$
therefore remains an interesting open problem.  We also remark that our 
results only apply to the order $\ga'$ approximation of the renormalization group 
flow.  It remains an interesting open problem to ask whether higher order 
approximations, or even the full RG flow, preserve $N = (2,2)$ supersymmetry 
in the sense we have described here.

Finally, Theorem \ref{classthm} can be thought of as a ``No-Go'' theorem for
certain string vacua.  In particular, we have given a complete classification of
supersymmetric solutions to (\ref{flow}) which evolve purely by homothety on
non-K\"ahler surfaces.  In the end only a restricted class of Hopf surfaces can
possibly admit solutions to these equations.  Other structural results on these
vacua in arbitrary dimension appear in section \ref{class}.  An interesting
further problem is to classify solutions to the RG flow which evolve entirely by
the action of the diffeomorphism group.

\section{Proof of main theorems} \label{mainsec}

\begin{proof}[Proof of Theorem \ref{PCF2BEFD}] Consider the Hermitian manifold
$(M^{2n}, g, J_+)$.  By
(\ref{GK}), this is a pluriclosed structure, i.e.
\begin{align}
d d^c_+ \omega_+ = 0.
\end{align}
By (\cite{ST1} Theorem 1.2), there exists a solution to (\ref{PCF}) with
initial condition $\omega_+$ on $[0, T)$ for some maximal $T \leq \infty$.
Call
this one-parameter family of K\"ahler forms $\omega_+(t)$, and define
$\omega_-(t)$ analogously as the solution to (\ref{PCF}) on the complex
manifold $(M, J_-)$ with initial condition $\omega_-$. Next consider the
time-dependent vector fields
\begin{align}
X^{\pm} = \left(- J_{\pm} d^*_{g_{\pm}} \omega_{\pm} \right)^{\sharp_{\pm}},
\end{align}
and let $\phi_{\pm}(t)$ denote the
one-parameter family of diffeomorphisms of $M$ generated by $X^{\pm}$, with
$\phi^{\pm}_0 = \Id$. Theorem 1.2 in \cite{ST2} implies that
$(\phi_+(t)^*g_+(t), \phi_+(t)^*(d^c_+
\omega_+(t)))$ is a solution to (\ref{flow}) with initial condition $(g, d^c_+
\omega_+)$.
Likewise, we have a solution
$(\phi_-(t)^*g_-(t), \phi_-(t)^*(d^c_-
\omega_-(t)))$ to (\ref{flow}) with initial condition $(g, d^c_- \omega_-)$.
However, if we let $(\til{g}(t), \til{H}(t))$ denote this latter solution, we
observe that
\begin{gather}
 \begin{split}
\dt \til{g}_{ij} =&\ - 2 \til{\Rc}_{ij} + \frac{1}{2} \til{H}_{ipq}
\til{H}_j^{\ pq} = - 2 \til{\Rc}_{ij} + \frac{1}{2} \left( - \til{H}_{ipq}
\right)
\left(- \til{H}_j^{\ pq} \right)\\
\dt \left( - \til{H} \right) =&\ \gD_{d} \left( - \til{H} \right),
 \end{split}
\end{gather}
i.e. $(\til{g}(t), - \til{H}(t))$ is a solution to (\ref{flow}) with initial
condition $(g, - d^c_- \omega_-)$. By (\ref{GK}), we see that
$(\phi_+(t)^*g_+(t), \phi_+(t)^*(d^c_+
\omega_+(t)))$
and $(\phi_-(t)^*g_-(t), -\phi_-(t)^*(d^c_-
\omega_-(t)))$ are two solutions of (\ref{flow}) with the same initial
condition. Using the uniqueness of solutions of (\ref{flow}) (\cite{Str}
Proposition 3.3), we conclude that these two solutions
coincide, and call the resulting one-parameter family $(g(t), H(t))$.

Next we want to identify the two complex structures with which $g$ remains
compatible.  We
observe by that for arbitrary vector fields $X$, $Y$,
\begin{gather}
 \begin{split}
g\left( \phi_{\pm}(t)^* J_{\pm} X, \phi_{\pm}(t)^* J Y \right) =&\ g\left(
\phi_{\pm}(t)^{-1}_*\cdot J_{\pm} \cdot\phi_{\pm} (t)_* X, \phi_{\pm}(t)^{-1}
_* \cdot J_{\pm} \cdot \phi_{\pm} (t)_* Y \right)\\
=&\ \left[\phi_{\pm}(t)^{-1,*} g \right] \left(J_{\pm} \cdot
\phi_{\pm}(t)_* X, J_{\pm} \cdot \phi_{\pm}(t)_* Y \right)\\
=&\ g_{\pm} \left(J_{\pm} \cdot \phi_{\pm} (t)_*
X, J_{\pm} \cdot \phi_{\pm} (t)_* Y \right)\\
=&\ g_{\pm} \left(\phi_{\pm} (t)_*
X,  \phi_{\pm} (t)_* Y \right)\\
=&\ \left[\phi_{\pm}(t)^* g_{\pm} \right] (X, Y)\\
=&\ g(X, Y).
 \end{split}
\end{gather}
Therefore $g(t)$ is compatible with $\phi_{\pm}(t)^* J_{\pm}(t)$. Denote
these two time dependent complex
structures by $\til{J}_{\pm}$.  It follows that
$\til{\omega_{\pm}} = \phi_{\pm} (t)^* \omega_{\pm}$. Next we note by
naturality of $d$ that
\begin{gather}
 \begin{split}
\til{d^c_{\pm}} \til{\omega_{\pm}} (X, Y, Z) =&\ - \left[d
\til{\omega_{\pm}}\right] \left(\til{J}_{\pm} X, \til{J}_{\pm} Y,
\til{J}_{\pm}
Z \right)\\
=&\ - \left[ d \phi_{\pm}(t)^* \omega_{\pm} \right]
\left(\phi_{\pm}(t)^{-1}_*\cdot J_{\pm} \cdot \phi_{\pm} (t)_* X,
\cdots
\right) \\
=&\ \left[ \phi_{\pm} (t)^* \left( - d \omega_{\pm} \right)
\right]
\left( \phi_{\pm}(t)^{-1}_* \cdot J_{\pm} \cdot \phi_{\pm} (t)_*X, \cdots
\right)\\
=&\ - d \omega_{\pm} \left( J_{\pm} \cdot \phi_{\pm} (t)_* X, \cdots
\right)\\
=&\ d^c_{\pm} \omega_{\pm} \left( \phi_{\pm}(t) X, \cdots
\right)\\
=&\ \phi_{\pm} (t)^* \left( d^c_{\pm} \omega_{\pm} \right)(X, Y,
Z)\\
=&\ \pm H(X, Y, Z).
 \end{split}
\end{gather}
It follows that
\begin{align}
\til{d^c_{+}} \til{\omega_{+}} =&\ - \til{d^c_{-}} \til{\omega_-} = H, \qquad d
H = 0,
\end{align}
showing that the triple $(g(t), \til{J}_+(t), \til{J}_-(t))$ is generalized
K\"ahler for all
time. This finishes the proof of Theorem \ref{PCF2BEFD}.
\end{proof}

\noindent To prove Theorem \ref{mainthm}, we need to find the evolution
equation for $J_{\pm}(t)$.  Note that our curvature convention is that
$(\N^2_{e_i, e_j} - \N^2_{e_j, e_i} )e_k = R_{ijk}^{\hskip 0.18in l} e_l$.

\begin{prop} \label{Jev} Let $(M^{2n}, \til{g}(t), J)$ be a solution to the
pluriclosed
flow. Let $\phi_t$ be the one parameter family of diffeomorphisms generated
by $\left(- J d^*_{\til{g}} \til{\omega} \right)^{\sharp}$ with $\phi_0 = \Id$,
and let
$g(t) = \phi_t^*(\til{g}(t)), J(t) = \phi_t^*(J)$. Then
\begin{gather} \label{Jflow}
\begin{split}
\dt J_k^l =&\ \left(\gD J\right)_k^l - [J, g^{-1} \Rc]_k^l\\
&\ - J_k^p D^s J_i^l D_p J_s^i - J_i^l D^s J_k^p D_p J_s^i + J_s^p D^s J_i^l
D_p J_k^i + J_i^l D^s J_s^p D_p J_k^i\\
&\ - J_p^l D_k J_t^p D^s J_s^t + J_k^p D_p J_t^l D^s J_s^t - J_t^p D^s J_s^t
D_p J_k^l.
\end{split}
\end{gather}
\begin{proof} It suffices to compute the time derivative of $J$ at $t = 0$.
First we note
\begin{align}
\left(\dt J(t) \right)_{|t = 0} =&\ \dt \left( \phi_t^* J \right)_{|t = 0} =
\mathcal L_{X(0)} J.
\end{align}
One may compute
\begin{align}
( \mathcal L_X J)(Y) = [X, JY] - J[X, Y].
\end{align}
In coordinates this reads
\begin{align}
( \mathcal L_X J)_k^l = J_p^l \del_k X^p - J_k^p \del_p X^l + X^p \del_p
J_k^l.
\end{align}
Furthermore, since the Levi-Civita connection $D$ is torsion-free we have
\begin{align}
( \mathcal L_X J)_k^l = J_p^l D_k X^p - J_k^p D_p X^l + X^p D_p J_k^l.
\end{align}
We next observe a formula for the vector field $X$.
\begin{align}
X^p =&\ - g^{pq} J_q^r (d^* \omega)_r = - J_t^p D^s J_s^t.
\end{align}
Thus
\begin{gather} \label{Jev10}
\begin{split}
\left(\mathcal L_X J \right)_k^l=&\ - J_p^l D_k \left( J_t^p D^s J_s^t
\right)+ J_k^p D_p \left( J_t^l D^s J_s^t \right) - J_t^p D^s J_s^t D_p
J_k^l\\
=&\ D_k D^s J_s^l - J_p^l D_k J_t^p D^s J_s^t + J_k^p J_t^l D_p D^s J_s^t +
J_k^p D_p J_t^l D^s J_s^t\\
&\ - J_t^p D^s J_s^t D_p J_k^l\\
=&\ D^s D_k J_s^l + g^{uv} \left(R_{u k v}^{\hskip 0.18in p} J_p^l - R_{u k
p}^{\hskip 0.18in l} J_v^p
\right)\\
&\ + J_k^p J_t^l D^s D_p J_s^t + J_k^p J_t^l g^{uv} \left( R_{u p v}^{\hskip
0.18in q} J_q^t
-R_{u p q}^{\hskip 0.18in t} J_v^q \right)\\
&\ - J_p^l D_k J_t^p D^s J_s^t + J_k^p D_p J_t^l D^s J_s^t - J_t^p D^s J_s^t
D_p J_k^l.
\end{split}
\end{gather}
We will simplify this expression using the vanishing of the Nijenhuis tensor
of $J$. Recall
\begin{align}
N(X, Y) =&\ [JX, JY] - [X, Y] - J[JX, Y] - J[X, JY].
\end{align}
In coordinates we may express
\begin{align}
N_{jk}^i = J_j^p \del_p J_k^i - J_k^p \del_p J_j^i - J_p^i \del_j J_k^p +
J_p^i \del_k J_j^p.
\end{align}
Again since $D$ is torsion-free we may express
\begin{align}
N_{jk}^i = J_j^p D_p J_k^i - J_k^p D_p J_j^i - J_p^i D_j J_k^p + J_p^i D_k
J_j^p.
\end{align}
Thus since $J$ is integrable we may conclude
\begin{gather}
 \begin{split}
0 =&\ D^k \left( J_i^l N_{jk}^i \right)\\
=&\ D^k \left[J_i^l \left( J_j^p D_p J_k^i - J_k^p D_p J_j^i - J_p^i D_j
J_k^p+ J_p^i D_k J_j^p \right) \right]\\
=&\ D^k D_j J_k^l - D^k D_k J_j^l + D^k \left[J_i^l J_j^p D_p J_k^i - J_i^l
J_k^p D_p J_j^i \right].
 \end{split}
\end{gather}
Plugging this into the first term of (\ref{Jev10}) we conclude
\begin{gather} \label{Jev20}
\begin{split}
\left( \mathcal L_X J \right)_k^l =&\ D^s D_s J_k^l - D^s \left[ J_i^l J_k^p
D_p J_s^i - J_i^l J_s^p D_p J_k^i \right] + J_k^p J_t^l D^s D_p J_s^t\\
&\ + g^{uv} \left(R_{u k v}^{\hskip 0.18in p} J_p^l - R_{u k p}^{\hskip 0.18in
l} J_v^p - J_k^p R_{u p v}^{\hskip 0.18in l} -
J_k^p J_t^l J_v^q R_{u p q}^{\hskip 0.18in t} \right)\\
&\ - J_p^l D_k J_t^p D^s J_s^t + J_k^p D_p J_t^l D^s J_s^t - J_t^p D^s J_s^t
D_p J_k^l\\
=&\ \left(\gD J\right)_k^l + J_i^l J_s^p D^s D_p J_k^i\\
&\ + g^{uv} \left(R_{u k v}^{\hskip 0.18in p} J_p^l - R_{u k p}^{\hskip 0.18in
l} J_v^p - J_k^p R_{u p v}^{\hskip 0.18in l} -
J_k^p J_t^l J_v^q R_{u p q}^{\hskip 0.18in t} \right)\\
&\ - J_k^p D^s J_i^l D_p J_s^i - J_i^l D^s J_k^p D_p J_s^i + J_s^p D^s J_i^l
D_p J_k^i + J_i^l D^s J_s^p D_p J_k^i\\
&\ - J_p^l D_k J_t^p D^s J_s^t + J_k^p D_p J_t^l D^s J_s^t - J_t^p D^s J_s^t
D_p J_k^l.
\end{split}
\end{gather}
Using the skew-symmetry of $J$ one has
\begin{gather} \label{simp1}
\begin{split}
J_i^l J_s^p D^s D_p J_k^i =&\ \frac{1}{2} J_i^l J_s^p \left( D^s D_p - D_p
D^s\right) J_k^i\\
=&\ \frac{1}{2} J_i^l J_s^p g^{st} \left( R_{p t k}^{\hskip 0.18in m} J_m^i -
R_{p t m}^{\hskip 0.18in i}
J_k^m \right)\\
=&\ - \frac{1}{2} g^{st} \left( R_{p t k}^{\hskip 0.18in l} J_s^p + J_i^l J_s^p
J_k^m R_{p t
m}^{\hskip 0.18in i} \right).
\end{split}
\end{gather}
Plugging this into (\ref{Jev20}) yields
\begin{gather}
 \begin{split}
  \dt J_k^l =&\ \left(\gD J\right)_k^l - \frac{1}{2} g^{uv} \left( R_{p v
k}^{\hskip 0.18in l}
J_u^p
+ J_i^l J_u^p J_k^m R_{p v m}^{\hskip 0.18in i} \right)\\
&\ + g^{uv} \left(R_{u k v}^{\hskip 0.18in p} J_p^l - R_{u k p}^{\hskip 0.18in
l} J_v^p - J_k^p R_{u p v}^{\hskip 0.18in l} -
J_k^p J_t^l J_v^q R_{u p q}^{\hskip 0.18in t} \right)\\
&\ - J_k^p D^s J_i^l D_p J_s^i - J_i^l D^s J_k^p D_p J_s^i + J_s^p D^s J_i^l
D_p J_k^i + J_i^l D^s J_s^p D_p J_k^i\\
&\ - J_p^l D_k J_t^p D^s J_s^t + J_k^p D_p J_t^l D^s J_s^t - J_t^p D^s J_s^t
D_p J_k^l.
 \end{split}
\end{gather}
Next we observe the simplification
\begin{gather}
 \begin{split}
  g^{uv} \left( J_i^l J_u^p J_k^m R_{pvm}^{\hskip 0.18in i} + 2 J_k^p J_t^l J_v^q 
R_{upq}^{\hskip 0.18in t} \right) =&\ g^{vu} J_u^p J_i^l J_k^m \left( R_{p v m}^{\hskip 0.18in i}
+ 2 R_{v m p}^{\hskip 0.18in i} \right)\\
=&\ g^{vu} J_u^p J_i^l J_k^m \left( R_{pvm}^{\hskip 0.18in i} + 
R_{v m p}^{\hskip 0.18in i} + R_{m p v}^{\hskip 0.18in i} \right)\\
=&\ 0.
 \end{split}
\end{gather}
Likewise
\begin{gather}
\begin{split}
 g^{uv} \left( R_{pvk}^{\hskip 0.18in l} J_u^p + 2 R_{ukp}^{\hskip 0.18in l} J_v^p 
\right) =&\ g^{uv} J_u^p \left( R_{pvk}^{\hskip 0.18in l} + 2 R_{vkp}^{\hskip 0.18in l} \right)\\
=&\ g^{uv} J_u^p \left( R_{pvk}^{\hskip 0.18in l} + R_{vkp}^{\hskip 0.18in l} 
+ R_{k p v}^{\hskip 0.18in l} \right)\\
=&\ 0
\end{split}
\end{gather}
Finally we note that
\begin{align}
 g^{uv} \left( R_{u k v}^{\hskip 0.18in p} J_p^l - J_k^p R_{u p v}^{\hskip 0.18in l}
 \right) =&\ J_k^p \Rc_p^l - \Rc_k^p J_p^l = [J, g^{-1} \Rc]_k^l
\end{align}
Plugging these simplifications into (\ref{Jev20}) yields the result.
\end{proof}
\end{prop}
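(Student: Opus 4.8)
The plan is to exploit the identity $J(t) = \phi_t^* J$ to collapse the whole problem to a single Lie derivative. Because the gauge prescription and the flow equation are structurally identical on every time slice, it suffices to verify \eqref{Jflow} at $t = 0$. There, differentiating $\phi_t^* J$ and using that $\phi_t$ is generated by $X = \left(-J d^*_{\til g}\til\omega\right)^{\sharp}$ gives $\left(\dt J\right)_{|t=0} = \mathcal{L}_{X(0)} J$, so the entire content reduces to computing this Lie derivative and rewriting it in the claimed form.

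First I would pass to coordinates via $(\mathcal{L}_X J)(Y) = [X, JY] - J[X,Y]$, obtaining $(\mathcal{L}_X J)_k^l = J_p^l \del_k X^p - J_k^p \del_p X^l + X^p \del_p J_k^l$, and then replace each $\del$ by the Levi-Civita connection $D$. This substitution is legitimate precisely because $D$ is torsion-free, so the Christoffel corrections cancel in the antisymmetric bracket combination. The next ingredient is an explicit formula for the generating field, namely $X^p = -J_t^p D^s J_s^t$, which I would obtain by unwinding $(d^*\omega)_r$ in terms of $g$, $J$, and $DJ$ using $\omega(\cdot,\cdot) = g(J\cdot,\cdot)$ and the compatibility of $D$ with $g$.

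The heart of the argument is the substitution of this $X^p$ into the coordinate Lie derivative and the ensuing simplification. Expanding produces two families of terms: those carrying two covariant derivatives of $J$, and those quadratic in $DJ$. For the second-derivative terms I would commute covariant derivatives to extract the rough Laplacian $\gD J = D^s D_s J$, picking up curvature contributions through the stated convention $(\N^2_{e_i,e_j}-\N^2_{e_j,e_i})e_k = R_{ijk}{}^l e_l$. The decisive maneuver is to eliminate the ``wrong'' second-derivative term $J_k^p J_t^l D^s D_p J_s^t$ by differentiating the vanishing Nijenhuis tensor: contracting $0 = D^k\!\left(J_i^l N_{jk}^i\right)$ and expanding $N$ in covariant-derivative form converts $D^k D_j J_k^l$ into $D^k D_k J_j^l$ plus divergence terms of the quadratic expressions, which is exactly what is needed to assemble $\gD J$ and simultaneously account for the quadratic remainder.

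The final obstacle, and the most delicate point, is cancelling the spurious curvature terms so that only the commutator $[J, g^{-1}\Rc]$ survives. Here I would first use the skew-symmetry of $J$ to symmetrize a leftover second-derivative curvature term via $\tfrac{1}{2} J_i^l J_s^p(D^s D_p - D_p D^s)J_k^i$, reducing it to pure curvature, and then invoke the first Bianchi identity on the resulting curvature triples together with the $J$-contractions. The Bianchi relations collapse the several curvature terms into a single Ricci commutator, and the residual $g^{uv}\!\left(R_{ukv}{}^p J_p^l - J_k^p R_{upv}{}^l\right)$ trace assembles into $J_k^p \Rc_p^l - \Rc_k^p J_p^l$. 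I expect this last stage to be the main bookkeeping hazard: getting every index contraction and every $J$-symmetry to line up so that the cancellations are exact requires care, but no genuinely new idea beyond integrability, skew-symmetry, and Bianchi is needed.
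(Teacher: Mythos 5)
Your proposal follows essentially the same route as the paper's proof: reduce to $\mathcal{L}_{X(0)}J$ at $t=0$, pass to covariant derivatives using torsion-freeness, substitute $X^p = -J_t^p D^s J_s^t$, eliminate the off-diagonal second-derivative terms by differentiating the vanishing Nijenhuis tensor, and collapse the curvature remainders via skew-symmetry of $J$ and the first Bianchi identity into $[J, g^{-1}\Rc]$. All the key maneuvers you identify are exactly those used in the paper, so the approach is correct and not genuinely different.
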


With this proposition in hand we can add an equation to the $B$-field flow
system to yield a new system of equations which preserves the generalized
K\"ahler
condition. Specifically, given a Riemannian manifold $(M^n, g)$ and $J \in
\End(TM)$, let
\begin{gather} \label{RQdef}
 \begin{split}
\mathcal R(J)_k^l =&\ [J, g^{-1} \Rc]_k^l\\
\mathcal Q(DJ)_k^l =&\  - J_k^p D^s J_i^l D_p J_s^i - J_i^l D^s J_k^p D_p J_s^i +
J_s^p D^s J_i^l
D_p J_k^i + J_i^l D^s J_s^p D_p J_k^i\\
&\ - J_p^l D_k J_t^p D^s J_s^t + J_k^p D_p J_t^l D^s J_s^t - J_t^p D^s J_s^t
D_p J_k^l.
 \end{split}
\end{gather}
Now consider the system of equations for an a priori
unrelated Riemannian metric $g$, three-form $H$, and tangent bundle
endomorphisms $J_{\pm}$:
\begin{gather} \label{GKflow}
\begin{split}
\dt g_{ij} =&\ - 2 \Rc_{ij} + \frac{1}{2} H_{ipq} H_j^{\ pq}\\
\dt H =&\ \gD_{d} H,\\
\dt J_{\pm} =&\ \gD J_{\pm} + \mathcal R(J_{\pm}) + \mathcal Q(DJ_{\pm}).
\end{split}
\end{gather}

\begin{thm} Let $M^n$ be a smooth compact manifold. Let $g_0 \in \Sym^2(T^*M)$ 
be a Riemannian metric, $H_0 \in \Lambda^3(T^*M)$, $(J_{\pm})_0 \in \End(TM)$.  
There exists $T > 0$ and a unique solution to (\ref{GKflow}) on $[0,
T)$ with initial condition $(g_0, H_0, (J_{\pm})_0)$.
\begin{proof} The proof is by now standard and we only give a sketch. For a
metric $g$ consider the vector field $X_{g}^k = g^{ij} \left( \gG_{ij}^k -
{\left(\gG^0\right)}_{ij}^k \right)$. Now consider the gauge-fixed system
\begin{gather} \label{gfflow}
\begin{split}
\dt g_{ij} =&\ - 2 \Rc_{ij} + \frac{1}{2} H_{ipq} H_j^{\ pq} + \left(L_{X_g}
g\right)_{ij},\\
\dt H =&\ \gD_{d} H + L_{X_g} H,\\
\dt J_{\pm} =&\ \gD J_{\pm} + \mathcal R(J_{\pm}) + \mathcal Q(DJ_{\pm}) +
L_{X_g} J_{\pm}.
\end{split}
\end{gather}
Let $\mathcal O(g, H, J_{\pm})$ denote the total differential operator
representing the right hand sides of (\ref{gfflow}). A calculation shows that
the principal symbol of the linearized operator of $\mathcal O$ is elliptic.
More specifically,
\begin{align}
\left[\gs D \mathcal O \right](\xi)\left(
\begin{matrix}
\gd g\\
\gd H\\
\gd J_{+}\\
\gd J_-
\end{matrix} \right) =
\left(
\begin{matrix}
\brs{\xi}^2 \Id & 0 & 0 & 0\\
\star & \brs{\xi}^2 \Id & 0 & 0\\
\star & 0 & \brs{\xi}^2 \Id & 0\\
\star & 0 & 0 & \brs{\xi}^2 \Id
\end{matrix} \right)
\left(
\begin{matrix}
\gd g\\
\gd H\\
\gd J_{+}\\
\gd J_-
\end{matrix} \right).
\end{align}
It follows from standard results that there is a unique solution to
(\ref{gfflow}) on some maximal time interval $[0, T)$. Moreover, if we let
$\phi_t$ denote the one-parameter family of diffeomorphisms generated by
$-X_t$ satisfying $\phi_0 = \Id$, it follows that $(\phi_t^*(g(t)),
\phi_t^*(H(t)), \phi_t^*(J_{\pm}(t)))$ is a solution to (\ref{GKflow}).
Finally, the proof of uniqueness is the same as that for Ricci flow, where one 
uses that in the modified gauge the diffeomorphisms $\phi_t$ satisfy the
harmonic map heat flow equation. For more detail see (\cite{ChowLu} pg. 117).
\end{proof}
\end{thm}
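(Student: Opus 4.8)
The plan is to reduce the problem to a strictly parabolic system by the DeTurck trick and then appeal to standard short-time existence theory for quasilinear parabolic systems. The only obstruction to direct parabolicity is the diffeomorphism invariance of the Ricci tensor: the principal symbol of $g \mapsto -2\Rc$ has a kernel along the gauge directions, so the $g$-equation in (\ref{GKflow}) is only weakly parabolic. To remedy this I would fix a background metric $g_0$ with Levi-Civita connection $\gG^0$, introduce the DeTurck vector field $X_g^k = g^{ij}(\gG_{ij}^k - (\gG^0)_{ij}^k)$, and pass to the gauge-fixed system (\ref{gfflow}) in which each evolution equation is modified by adding the corresponding Lie derivative $L_{X_g}$. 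The classical Ricci--DeTurck computation shows that $g \mapsto -2\Rc + L_{X_g}g$ has principal symbol $\brs{\xi}^2 \Id$ acting on $\gd g$.

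The heart of the argument, and the step requiring the most care, is verifying that the full linearized operator $D\mathcal{O}$ has the block lower-triangular elliptic symbol displayed in the statement. I would organize this as follows. First, for the $g$-row the term $\frac12 H_{ipq}H_j^{\ pq}$ is of order zero in $H$, so its linearization contributes nothing to the principal symbol, and no principal coupling of $\gd H$ or $\gd J_{\pm}$ back into the $g$-equation occurs; hence the first row is $(\brs{\xi}^2\Id, 0, 0, 0)$. Second, the leading terms of the $H$- and $J_{\pm}$-equations are $\gD_d H$ and $\gD J_{\pm} = g^{ab}D_a D_b J_{\pm}$, each of which differentiates its own unknown twice and so produces $\brs{\xi}^2\Id$ on $\gd H$, respectively $\gd J_{\pm}$. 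The crucial point is that the nonlinear terms $\mathcal{R}(J_{\pm})$ and $\mathcal{Q}(DJ_{\pm})$ are genuinely subprincipal in $J_{\pm}$: $\mathcal{Q}$ is a sum of quadratic expressions in $DJ_{\pm}$, whose linearization in $J_{\pm}$ is of first order, while $\mathcal{R}=[J,g^{-1}\Rc]$ is of order zero in $J$. Thus neither contributes to the diagonal symbol. Their dependence on $g$, together with the $g$-dependence of $\gD_d$ and $\gD$ through the curvature, does produce second-order contributions in $\gd g$; but these sit strictly below the diagonal, accounting for the $\star$ entries in the first column. Since the symbol matrix is block lower triangular with each diagonal block equal to $\brs{\xi}^2\Id$, its eigenvalues are all $\brs{\xi}^2 > 0$, and the linearized operator is elliptic.

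With ellipticity of (\ref{gfflow}) established, standard existence theory for strictly parabolic quasilinear systems yields a unique solution on a maximal interval $[0,T)$ with the prescribed initial data. To recover a solution of the original system I would let $\phi_t$ be the one-parameter family of diffeomorphisms generated by $-X_t$ with $\phi_0 = \Id$, whose existence is guaranteed on $[0,T)$ since $X_t$ is smooth, and then check that $(\phi_t^* g(t), \phi_t^* H(t), \phi_t^* J_{\pm}(t))$ solves (\ref{GKflow}); this follows from the naturality of $\Rc$, $\gD_d$, $\gD$, $\mathcal{R}$, and $\mathcal{Q}$ under pullback, exactly as in the Ricci flow case. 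Finally, for uniqueness of solutions to (\ref{GKflow}) itself, I would run the Hamilton--DeTurck argument: given any solution, one solves the harmonic map heat flow of the identity relative to the background metric to produce the gauge bringing it into the form (\ref{gfflow}), and then invokes the uniqueness already obtained for the gauge-fixed system. The main obstacle throughout is therefore not the analysis, which is by now routine, but the bookkeeping in the symbol computation--in particular confirming that every term in $\mathcal{Q}(DJ_{\pm})$ and $\mathcal{R}(J_{\pm})$ is subprincipal, so that the coupling to $\gd g$ never appears on the diagonal.
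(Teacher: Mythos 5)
Your proposal follows essentially the same route as the paper: the DeTurck gauge-fixing by $X_g^k = g^{ij}(\gG_{ij}^k - (\gG^0)_{ij}^k)$, verification that the linearized symbol is block lower-triangular with $\brs{\xi}^2\Id$ on the diagonal (with the $\mathcal R$ and $\mathcal Q$ terms being subprincipal in $J_\pm$), short-time existence for the gauge-fixed system, pullback by the diffeomorphisms generated by $-X_t$, and uniqueness via the harmonic map heat flow argument. The argument is correct and, if anything, spells out the symbol bookkeeping in more detail than the paper's sketch.
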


We now give a corollary to this discussion which is a more precise statement
of Theorem \ref{mainthm}.

\begin{cor}\label{maincor} Let $(M^{2n}, g, J_{\pm})$ be a compact generalized 
K\"ahler manifold. The solution to (\ref{GKflow}) with initial condition $(g,
d^c_{+} \omega_+, J_{\pm})$ remains a generalized K\"ahler structure in that
$g$ is compatible with $J_{\pm}$, $H = \pm d^c_{\pm} \omega_{\pm}$, and
(\ref{GK}) holds at all time the solution exists.
\begin{proof} Let $(g(t), H(t))$ be the solution to (\ref{flow}) with initial
condition $(g, d^c_+ \omega_+)$. We showed in Theorem \ref{mainthm} that
$(g(t), H(t), J_{\pm}(t))$ is a generalized K\"ahler structure for all times,
where $J_{\pm}(t) = \phi_{\pm}(t)^* J_{\pm}$ and $\phi_t^{\pm}$ denote the
one-parameter families of diffeomorphisms used above. However, from
Proposition \ref{Jev} we have that $J_{\pm}$ are solutions of
\begin{align}
\dt J_{\pm} =&\ \gD J_{\pm} + \mathcal R(J_{\pm}) + \mathcal Q(DJ_{\pm}).
\end{align}
It follows that $(g(t), H(t), J_{\pm}(t))$ is the unique solution to
(\ref{GKflow}) with initial condition $(g, d^c_+ \omega_+, J_{\pm})$, and the
result follows.
\end{proof}
\end{cor}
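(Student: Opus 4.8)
The plan is to avoid any fresh analysis and instead assemble a solution of (\ref{GKflow}) out of objects already constructed, and then invoke uniqueness. First I would repeat the opening of the proof of Theorem \ref{PCF2BEFD}: run the pluriclosed flow (\ref{PCF}) on $(M, \omega_+, J_+)$ and on $(M, \omega_-, J_-)$ to obtain $\omega_\pm(t)$ with Hermitian metrics $g_\pm(t)$, form the vector fields $X^\pm = \left(-J_\pm d^*_{g_\pm}\omega_\pm\right)^{\sharp_\pm}$ together with their flows $\phi_\pm(t)$, and set $g(t) := \phi_+(t)^* g_+(t) = \phi_-(t)^* g_-(t)$, $H(t) := \phi_+(t)^*\!\left(d^c_+\omega_+(t)\right)$, and $J_\pm(t) := \phi_\pm(t)^* J_\pm$. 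By Theorem \ref{PCF2BEFD} (equivalently Theorem \ref{mainthm}) the pair $(g(t), H(t))$ solves (\ref{flow}) with the prescribed initial data $(g, d^c_+\omega_+)$, and the triple $(g(t), J_+(t), J_-(t))$ is generalized K\"ahler for every $t$; in particular $g(t)$ is compatible with each $J_\pm(t)$, and the identities $H = \pm d^c_\pm\omega_\pm$ together with (\ref{GK}) hold throughout.

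The only remaining point is that $J_\pm(t)$ solve the third equation of (\ref{GKflow}). This is exactly the content of Proposition \ref{Jev}: applied to the pluriclosed-flow solution $(M, g_+(t), J_+)$ and the gauge flow $\phi_+(t)$ it gives $\dt J_+(t) = \gD J_+(t) + \mathcal R(J_+(t)) + \mathcal Q(D J_+(t))$, with every geometric operator computed with respect to $\phi_+(t)^* g_+(t)$; the same proposition applied to $J_-$ yields the identical equation for $J_-(t)$ with respect to $\phi_-(t)^* g_-(t)$. Here the step requiring care is the observation that both of these background metrics are the \emph{same} metric $g(t)$, which is guaranteed by the metric-coincidence statement $\phi_+(t)^* g_+(t) = \phi_-(t)^* g_-(t)$ of Theorem \ref{PCF2BEFD}. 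This is what upgrades two separate, decoupled $J$-evolutions into a single solution $(g(t), H(t), J_+(t), J_-(t))$ of the fully coupled system (\ref{GKflow}) with initial condition $(g, d^c_+\omega_+, J_\pm)$.

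I would then close by uniqueness. The existence-uniqueness theorem just established for (\ref{GKflow}) shows that the initial value problem with data $(g, d^c_+\omega_+, J_\pm)$ has exactly one solution on its maximal interval; since the quadruple constructed above is such a solution and is generalized K\"ahler at every time, it must coincide with the solution of (\ref{GKflow}), and all of the asserted properties follow for as long as the flow exists.

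I expect the genuine obstacle to be entirely bookkeeping rather than estimate: Proposition \ref{Jev} is phrased for one complex structure paired with its own gauge flow, so one must check carefully that it can be invoked for $J_+$ and for $J_-$ simultaneously, and that, after the respective gauges, both evolutions are driven by the common metric $g(t)$. Once this consistency is confirmed, no PDE analysis beyond the already-established short-time existence, ellipticity, and uniqueness is needed, and the corollary is immediate.
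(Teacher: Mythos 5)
Your proposal is correct and follows essentially the same route as the paper: assemble the candidate solution from the gauged pluriclosed flows as in Theorem \ref{PCF2BEFD}, verify via Proposition \ref{Jev} that $J_{\pm}(t)$ satisfy the third equation of (\ref{GKflow}), and conclude by the uniqueness statement for (\ref{GKflow}). Your added attention to the fact that both gauged metrics coincide with the single metric $g(t)$ is a fair point of care, but it is already supplied by Theorem \ref{PCF2BEFD} and does not change the argument.
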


\section{The structure of static metrics} \label{class}

In this section we will collect some results on static pluriclosed solutions
to (\ref{flow}). We begin with some general definitions.

\begin{defn} Let $(M^{2n}, \omega, J)$ be Hermitian manifold with pluriclosed
metric, and let $H = d^c \omega$. We say that $\omega$ is a \emph{$B$-field
flow soliton} if there exists a vector field $X$ and $\gl \in \mathbb R$ such
that
\begin{gather} \label{soliton}
\begin{split}
\Rc - \frac{1}{4} H^2 + L_X g =&\ \gl g,\\
- \frac{1}{2} \gD_{d} H + L_X H =&\ \gl H.
\end{split}
\end{gather}
The form $\omega$ is called \emph{$B$-field flow static}, or simply
\emph{static} for short, if (\ref{soliton}) is satisfied with $X = 0$.
\end{defn}

\begin{prop} \label{staticprop} Let $(M^{2n}, \omega, J)$ be a static
structure. Then
\begin{itemize}
\item{If $\gl = 0$ then $d^* H = 0$ and $b_1(M) \leq {2n}$}
\item{ If $\gl < 0$ then $g$ is K\"ahler, i.e. $H = 0$.}
\item{ If $\gl > 0$ then $\brs{\pi_1(M)} < \infty$.}
\item{ If $\gl \neq 0$ then $[H] = 0$.}
\end{itemize}
\begin{proof}We note that
\begin{align}
\int_M \brs{d^* H}^2 = \int_M \left<d d^* H, H \right> = 2 \gl \int_M
\brs{H}^2.\end{align}
The first part of the first statement and second statement immediately follow.
Now note that
\begin{align}
\Rc = \gl g + \frac{1}{4} H^2.
\end{align}
Since $H^2$ is positive semidefinite, we conclude that if $\gl > 0$ then $\Rc
> 0$. It follows from the Bonnet-Meyers Theorem that $\brs{\pi_1(M)} <
\infty$. Also, if $\gl = 0$, the Bochner argument yields the bound $b_1(M)
\leq 2n$. Finally, since $H$ is closed, if $\gl \neq 0$ we conclude that $H =
\frac{1}{\gl} \gD_{d} H = \frac{1}{\gl} d d^* H$ and hence $[H] = 0$.
\end{proof}
\end{prop}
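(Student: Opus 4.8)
The plan is to extract two decoupled consequences from the pair of static equations: an integral identity from the evolution equation for $H$, and a pointwise Ricci lower bound from the evolution equation for $g$. The crucial preliminary observation is that since $(M, \omega, J)$ is pluriclosed we have $d H = d d^c \omega = 0$, so $H$ is closed and hence $\gD_d H = - d d^* H$, which collapses the otherwise awkward operator $\gD_d$ onto a single term.

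First I would establish the energy identity. Pairing the static equation $-\frac{1}{2}\gD_d H = \gl H$ with $H$, integrating over $M$, and using $\gD_d H = -d d^* H$ together with the fact that $d^*$ is the formal adjoint of $d$ gives
\begin{align*}
\int_M \brs{d^* H}^2 = \int_M \left< d d^* H, H\right> = 2\gl \int_M \brs{H}^2.
\end{align*}
When $\gl = 0$ the right side vanishes, forcing $d^* H = 0$. When $\gl < 0$ the left side is nonnegative while the right side is nonpositive, so both vanish; since $\gl \neq 0$ this forces $\int_M \brs{H}^2 = 0$, i.e.\ $H \equiv 0$, so $g$ is K\"ahler. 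This dispatches the first part of the first statement and all of the second.

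Next I would rewrite the metric equation as $\Rc = \gl g + \frac{1}{4}H^2$, where $(H^2)_{ij} = H_{ipq}H_j^{\ pq}$ is manifestly positive semidefinite (it is a Gram-type contraction), so $\Rc \geq \gl g$ pointwise. For $\gl > 0$ this yields $\Rc \geq \gl g > 0$, and Bonnet--Myers gives that $M$ is compact with $\brs{\pi_1(M)} < \infty$. For $\gl = 0$ we get $\Rc \geq 0$, and the standard Bochner argument on harmonic one-forms bounds $b_1(M) \leq \dim M = 2n$, completing the first statement. Finally, for the cohomological claim, when $\gl \neq 0$ the static equation for $H$ solves for $H$ as a constant multiple of $d d^* H$, exhibiting it as an exact form, so $[H] = 0$ in de Rham cohomology.

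The only point requiring genuine care is the sign and normalization bookkeeping in passing between $\gD_d$, $d^* H$, and the factor of $\gl$; once $d H = 0$ is in hand and the positive semidefiniteness of $H^2$ is noted, every remaining step is a routine application of Hodge theory and comparison geometry, so I do not anticipate a serious obstacle.
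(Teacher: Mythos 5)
Your proposal is correct and follows essentially the same route as the paper: the integrated identity $\int_M \brs{d^* H}^2 = 2\gl \int_M \brs{H}^2$ for the first two claims, the pointwise rewriting $\Rc = \gl g + \tfrac{1}{4}H^2$ with positive semidefiniteness of $H^2$ feeding Bonnet--Myers and Bochner, and exactness of $H$ from solving the static equation when $\gl \neq 0$. Your explicit remark that $dH = 0$ collapses $\gD_d H$ to $-dd^*H$ is a point the paper uses only implicitly, but it is the same argument.
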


\begin{cor} \label{constantfix} Let $(M^4, \omega, J)$ be a static structure,
and suppose $M$ is not of K\"ahler type, i.e. $b_1$ is odd. Then $\gl = 0$.
\begin{proof} Since the manifold $M$ does not admit K\"ahler metrics, the
second statement of Proposition \ref{staticprop} rules out $\gl < 0$.
Likewise, if $\gl > 0$ then by the third statement of Proposition
\ref{staticprop} we conclude that $\brs{\pi_1(M)} < \infty$, so that $b_1(M) =
0$, contradicting that $b_1$ is odd. Thus $\gl = 0$.
\end{proof}
\end{cor}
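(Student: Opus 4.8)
The plan is to eliminate the two nonzero sign possibilities for $\gl$ using the trichotomy already established in Proposition \ref{staticprop}, combined with the classical fact that a compact complex surface is of K\"ahler type precisely when its first Betti number is even. The hypothesis that $b_1$ is odd is exactly the statement that $(M^4, J)$ admits no K\"ahler metric, and I would use it in that form throughout. So the whole argument reduces to checking that neither $\gl < 0$ nor $\gl > 0$ is compatible with this parity constraint.

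First I would rule out $\gl < 0$. By the second bullet of Proposition \ref{staticprop}, $\gl < 0$ forces $H = 0$, so that $g$ is itself a K\"ahler metric on $(M^4, J)$. This directly contradicts the fact that $M$ carries no K\"ahler metric, and hence $\gl \geq 0$. Next I would rule out $\gl > 0$. The third bullet of Proposition \ref{staticprop} gives $\brs{\pi_1(M)} < \infty$ whenever $\gl > 0$. Since the first Betti number is the rank of the abelianization of $\pi_1(M)$, a finite fundamental group forces $b_1(M) = 0$, which is even, again contradicting the assumption that $b_1$ is odd. Therefore $\gl \leq 0$, and combining the two steps yields $\gl = 0$.

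There is no real obstacle in the computation itself; it is purely a case analysis feeding off Proposition \ref{staticprop}, and both excluded cases collapse the same way, namely by forcing $b_1$ to be even. The only substantive ingredient is the surface-classification input identifying ``$b_1$ odd'' with ``non-K\"ahler type,'' which I would take as standard (Kodaira, with the general existence statement due to Siu, Buchdahl, and Lamari). If one wanted to sidestep even that, one could observe directly that $\gl < 0$ makes $M$ K\"ahler and $\gl > 0$ makes $b_1(M) = 0$, each of which already violates the parity of $b_1$, so the corollary is genuinely immediate once Proposition \ref{staticprop} is in hand.
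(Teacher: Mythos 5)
Your argument is correct and coincides with the paper's own proof: both rule out $\gl < 0$ via the second bullet of Proposition \ref{staticprop} (a static structure with $\gl<0$ would be K\"ahler) and rule out $\gl > 0$ via the third bullet (finite fundamental group forces $b_1 = 0$, contradicting odd $b_1$). The additional remarks on the equivalence of odd $b_1$ with non-K\"ahler type are consistent with the paper's hypothesis as stated and do not change the argument.
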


\begin{prop} Let $(M^4, \omega, J)$ be a static structure and suppose $M$ is
of K\"ahler type and $\gl = 0$. Then $\omega$ is K\"ahler-Einstein.
\begin{proof} It follows from the first statement of Proposition
\ref{staticprop} that $H$ is harmonic. It follows from $\del\delb$-lemma that
$H$ is exact, and hence $H$ vanishes and so the metric is K\"ahler-Einstein.
\end{proof}
\end{prop}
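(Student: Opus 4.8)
The plan is to use the static equations with $\gl = 0$ to force $H = d^c \omega$ to be a \emph{harmonic} form that is also \emph{exact}, whence Hodge theory makes it vanish; once $H = 0$ the metric is K\"ahler and Ricci-flat, and hence K\"ahler-Einstein.

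First I would record the consequences of the hypotheses. Since $\omega$ is pluriclosed, $H = d^c \omega$ satisfies $dH = d d^c \omega = 0$, and the first statement of Proposition \ref{staticprop} gives $d^* H = 0$ when $\gl = 0$ (equivalently, the second static equation $-\tfrac{1}{2}\gD_d H = \gl H = 0$ says directly that $H$ is harmonic on the compact manifold $M$). The first static equation with $\gl = 0$ reduces to $\Rc = \tfrac{1}{4} H^2$.

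The heart of the argument is to show $[H] = 0$ in $H^3_{dR}(M;\mathbb R)$, and here I would invoke that $(M, J)$, being of K\"ahler type, satisfies the $\del\delb$-lemma. Decomposing the real form $H = d^c\omega$ by bidegree, its $(2,1)$-part is, up to a convention-dependent constant, $H^{2,1} = -i\,\del\omega$, with $H^{1,2} = \overline{H^{2,1}}$. This $(2,1)$-form is manifestly $\del$-exact, being $\del(-i\omega)$, and it is $d$-closed: $\del H^{2,1} = 0$ because $\del^2 = 0$, while $\delb H^{2,1}$ is a multiple of $\del\delb\omega$, which vanishes by pluriclosedness. The $\del\delb$-lemma then promotes ``$d$-closed and $\del$-exact'' to ``$\del\delb$-exact'', so that $H^{2,1} = \del\delb\eta = d(\delb\eta)$ is $d$-exact; adding its conjugate shows $H$ itself is $d$-exact. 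A form on a compact manifold that is simultaneously harmonic and exact must be zero, so $H = 0$.

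To conclude, $d^c\omega = H = 0$ forces $\del\omega$ and $\delb\omega$, which live in complementary bidegrees, to vanish separately; hence $d\omega = 0$ and $\omega$ is K\"ahler. Then $\Rc = \tfrac{1}{4}H^2 = 0$, so $(M,\omega,J)$ is Ricci-flat K\"ahler, i.e. K\"ahler-Einstein with Einstein constant $0$. I expect the exactness step to be the main obstacle: $H = d^c\omega$ is not visibly $d$-exact, and one must route through the bidegree decomposition and apply the correct form of the $\del\delb$-lemma, available precisely because $M$ is of K\"ahler type, to convert $d$-closedness together with $\del$-exactness into genuine $d$-exactness.
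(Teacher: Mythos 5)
Your proposal is correct and follows essentially the same route as the paper's proof: show $H$ is harmonic, use the $\del\delb$-lemma (valid since $M$ is of K\"ahler type) to show $H$ is exact, conclude $H=0$ by Hodge theory, and then read off that $\omega$ is K\"ahler and Ricci-flat. You have simply filled in the details the paper leaves implicit, in particular the bidegree decomposition showing $d^c\omega$ is $d$-exact.
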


\begin{prop} \label{parallelLee} Suppose $(M^4, g, J)$ is a static structure
with $\gl = 0$. Then $D \theta = 0$, that is, the Lee form is parallel with
respect to the Levi-Civita connection.
\begin{proof} As noted above, if $\gl = 0$ then $\Rc = \frac{1}{4} H^2 \geq
0$. Also, we have $d^* H = 0$. But in the case of complex surfaces, $\theta =
\star H$, therefore $\theta$ is harmonic. It then follows from the Bochner
technique that $\theta$ is parallel.
\end{proof}
\end{prop}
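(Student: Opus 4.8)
The plan is to reduce the statement to a standard Bochner vanishing argument, the only genuinely surface-specific ingredient being the identification of the Lee form with the Hodge dual of the torsion three-form. First I would extract what the hypotheses give directly. Setting $X = 0$ and $\gl = 0$ in the static equation (\ref{soliton}) yields
\begin{align*}
\Rc = \frac{1}{4} H^2 \geq 0,
\end{align*}
since $H^2$ is positive semidefinite; moreover the first bullet of Proposition \ref{staticprop} supplies $d^* H = 0$. Because $\omega$ is pluriclosed we also have $d H = d d^c \omega = 0$, so $H$ is simultaneously closed and co-closed, i.e. a harmonic three-form.

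Next I would invoke the special feature of real dimension four: on a Hermitian surface the torsion three-form $H = d^c \omega$ is, up to a fixed sign and normalization, the Hodge dual of the Lee form $\theta$, so that $\theta = \star H$. Since the Hodge star is a parallel bundle isometry that commutes with the Hodge Laplacian on the closed oriented manifold $M^4$, harmonicity of $H$ transfers to $\theta$; thus $\theta$ is a harmonic one-form, $d\theta = 0$ and $d^* \theta = 0$.

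The heart of the argument is then the Weitzenb\"ock formula for one-forms, $(d d^* + d^* d)\theta = D^* D \theta + \Rc(\theta)$. Harmonicity of $\theta$ forces the left-hand side to vanish, giving $D^* D \theta = - \Rc(\theta)$. Pairing with $\theta$ and integrating over the compact $M$ (compactness being the standing assumption throughout this section) yields
\begin{align*}
\int_M \brs{D \theta}^2 = \int_M \left< D^* D \theta, \theta \right> = - \int_M \Rc(\theta, \theta) \leq 0,
\end{align*}
where the final inequality uses $\Rc \geq 0$. Hence $\int_M \brs{D\theta}^2 = 0$, so $D\theta = 0$, which is the claim.

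The main obstacle I anticipate is not the Bochner step, which is routine once the Ricci sign and harmonicity are in place, but verifying cleanly the identity $\theta = \star H$ in dimension four with the correct sign: one must reconcile the convention $H = d^c \omega$ used throughout with the definition of the Lee form (via $d\omega = \theta \wedge \omega$, equivalently $\theta$ recovered from $\gd \omega$ up to sign), and confirm that $\star$ is an isometry preserving harmonic forms, which relies on $M$ being compact and oriented. Once that identity and its normalization are pinned down, the remaining steps are entirely standard.
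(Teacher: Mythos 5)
Your proposal is correct and follows the paper's argument exactly: $\Rc = \frac{1}{4}H^2 \geq 0$ and $d^*H = dH = 0$ make $H$ harmonic, the surface identity $\theta = \star H$ transfers harmonicity to $\theta$, and the Bochner--Weitzenb\"ock argument with $\Rc \geq 0$ forces $D\theta = 0$. You have merely written out the Bochner step that the paper leaves implicit.
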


Finally we give the proof of Theorem \ref{classthm}.

\begin{proof}[Proof of Theorem \ref{classthm}] It follows from Corollary
\ref{constantfix} that $\gl = 0$. Thus
from Proposition \ref{parallelLee} we conclude that $D \theta = 0$. Recalling
that the pluriclosed flow equations and $B$-field flow equations differ by the
Lie derivative of the vector dual to $\theta$, we conclude that in fact
$\omega$ is static for the pluriclosed flow (see \cite{ST2} Theorem 6.5), i.e.
\begin{align}
\del \del^*_{\omega} \omega + \delb \delb^*_{\omega} \omega +
\frac{\sqrt{-1}}{2} \del \delb \log \det g =&\ 0.
\end{align}
One can check (see \cite{ST1} Proposition 3.3, \cite{IvPap} Proposition 3.3)
that this is the same as
\begin{align}
S - Q =&\ 0
\end{align}
where $S = \tr_{\omega} \Omega$ is the curvature endomorphism associated to
the Chern connection, and
\begin{align}
Q_{i\bj} =& g^{k \bl} g^{m \bn} T_{i k \bn} T_{\bj \bl m},
\end{align}
where $T$ is the torsion of the Chern connection. In the case of surfaces, one
has (\cite{ST1} Lemma 4.4) that $Q = \frac{1}{2} \brs{T}^2 \omega$. Therefore
the metric defines a Hermitian-Einstein connection on $TM$. It follows from
(\cite{GaudIv} Theorem 2) that $(M, g, J)$ is either K\"ahler-Einstein or $g$
is locally isometric, up to homothety, to $\mathbb R \times S^3$. It follows
from \cite{Gauduchon} that the manifold is a Hopf surface. More specifically,
it follows from (\cite{Gauduchon} III Lemma 11) that the fundamental group
takes the form claimed in the theorem.
\end{proof}

\begin{rmk} Indeed each Hopf surface described in Theorem \ref{classthm}
admits static metrics, given by the metric $\frac{1}{\rho^2} \del \delb
\rho^2$, where $\rho = \sqrt{z_1 \bar{z}_1 + z_2 \bar{z}_2}$.
\end{rmk}

\bibliographystyle{hamsplain}

\end{document}